\def\F{\mathbb{F}}
\def\ff{\mathbb{F}}
\def\Tr{\mathrm{Tr}}
\def\frL{\mathfrak{L}}
\def\Supp{\mathrm{Supp}}
\def\zz{\mathbb{Z}}
\def\pp{\mathbb{P}}
\def\rr{\mathbb{R}}
\def\Frob{\mathrm{Frob}}
\newtheorem{theorem}{Theorem}[section]
\newtheorem{lemma}[theorem]{Lemma}
\newtheorem{prop}[theorem]{Proposition}
\newtheorem{cor}[theorem]{Corollary}
\theoremstyle{definition}
\newtheorem{definition}[theorem]{Definition}
\newtheorem{example}[theorem]{Example}
\theoremstyle{remark}
\subjclass[2010]{11G20 ,11L07 , 11T71}
\begin{document}

	\title{On the dimension of Algebraic-geometric trace codes}
	\author{P. Le \& S. Chetty}
	\date{}
	\maketitle

	\begin{abstract}
		We shall derive conditions in which a general formula for the dimension of $q$-ary trace codes induced by algebraic-geometric codes.  Significant to this result are several dimension reducing methods for the underlying functions spaces associated to $q$-ary trace codes.
	\end{abstract}

	\section{Introduction}
	Many good error correcting codes over a finite field can be constructed from other codes using the trace map.  More generally, given a code $C$ over a finite field $\mathbb{F}$, one can construct a subfield subcode by restriction (e.g. in the coordinates) to a subfield of $\mathbb{F}$, and Katsman and Tsfasman \cite{KT} prove that one can often obtain better parameters than trivial bounds guarantee. Precise lower bounds on the dimension of subfield subcodes have been given in, e.g., \cite{KT}, \cite{WIRTZ}, and \cite{STICHT90}. Delsarte's Theorem \cite{DELSARTE75} is used to describe subfield subcodes as trace codes and BCH-codes, classical and generalized Goppa codes, and alternant codes all can be realized as the dual of trace codes.
	
	Algebraic-geometric (AG) codes arise from the evaluation of the elements of an $\ff_{q^m}$-vector space of functions in a set of $\ff_{q^m}$ rational points on a curve $X$.  We shall consider trace codes associated to algebraic-geometric codes.  Conditions will be derived where one can determine the exact dimension of such codes. A key ingredient in the present work, as in \cite{HMO} and \cite{VERON} for certain classes of codes, is to understand the kernel of the trace map. Our use of Bombieri's estimate, following \cite{VLUGT91}, and consideration of a more general class of codes differs from the methods and setting of \cite{HMO} and \cite{VERON}.
	
	The main result, Theorem \ref{main}, is an extension of results that appear in \cite{VLUGT91}.  The original bound only applied for trace maps from the original field to the prime field.  We modify this to include trace maps to intermediate fields. Significant modifications of the original proof are needed to accommodate the more generalized trace in the execution of Bombieri's estimate for exponential sums.\cite{BOMBIERI66}.  The primary modification is summarised in Lemma \ref{bombcond}.
	
	For a general introduction on AG codes and trace codes, see \cite{STICHT93}.
	
	\section{Definition of Code and Main Result}
	
	\subsection{Background}

	Let $p$ be a prime number and $q=p^r$. Given a linear code $C$ of length $n$ over $\ff_{q^m}$, A trace code over $\ff_q$ is constructed from $C$ by applying the trace map from $\ff_{q^m}$ to $\ff_q$ coordinate-wise to the letters of the words of $C$.  This $q$-ary code is denoted $\Tr_{q^m/q} C$ or simply $\Tr C$ if the base fields in question are clear.
	
	Let $X$ be a geometrically irreducible, non-singular projective curve of genus $\mathfrak{g}$ defined over $\ff_{q^m}$.
	Consider $\ff_{q^m}(X)$ the $\ff_{q^m}$ vector space of functions on $X$.
	A divisor $G = \sum n_Q Q$, defined over $\ff_{q^m}$, may be split into two divisors $G^+$ and $G^-$ where $G^+ = \sum_{n_Q>0}n_Q Q$ and $G^- = \sum_{n_Q <0} n_Q Q$.  Hence $G = G^+ + G^-$.  The sum $\sum_i n_i$ of the coefficients of $G$ is called the degree of $G$, denoted $\deg(G)$.
	
	Define $\frL(G)$ to be the vector space of functions
	$$\frL(G) = \{ f \in \ff_{q^m}(X) \mid (f)+G \geq 0 \} \cup \{0\}.$$
	To generate a code from $\frL(G)$ we take a subset of $n$ distinct $\ff_{q^m}$-rational points away from the poles of $\frL(G)$:
	$$D:= \{P_1,\ldots,P_n\} \subseteq X(\ff_{q^m}) \setminus \Supp (G^+).$$
	For a divisor $G$ we denote the support of $G$ to be $\Supp(G):=\{Q \mid n_Q \neq 0\}$.
	For our purposes we will take $D= X(\ff_{q^m}) \setminus \Supp (G^+)$ the largest possible set.  Note that it is only necessary that $|D|>\deg(G)$.
	
	We define our AG code to be
		$$C:=C(D,G) = \{(f(P_1),\ldots, f(P_n)), f \in \frL(G) \}.$$
	When $2\mathfrak{g}-2<\deg(G)<n$, by Riemann-Roch we have
	$$\dim_{\ff_{q^m}}\frL(G) = \deg(G) +1-\mathfrak{g}.$$
	Under the assumptions of Riemann-Roch we have $\deg(G) \leq n$. Hence the dimension of $C$ as a $\ff_{q^m}$ vector space is also $k$.  In this way we identify $f \in \frL(G)$ with its image in $C$.
	
	An AG trace code is defined as the coordinate-wise application of the trace map
	$$\Tr C: = \{(\Tr_{q^m/q}(f(P_1)),\ldots, \Tr_{q^m/q}(f(P_n))), f \in \frL(G) \}.$$

	\subsection{Main Result}
	To state our main result we require the following construction based on the divisor $G$.
	For $r \in \rr$, let $[r]$ denotes the greatest integer function. Consider the divisor
	$$[G/q] := \sum_{n_Q >0} [n_Q/q] Q + \sum_{n_Q<0} n_Q Q.$$  That is, we are dividing the positive coefficients by $q$ and rounding down to the nearest integer.  We are in a sense dividing the pole part of $G$ by $q$.
	This construction will be useful in determining the kernel of the trace map.
	
	Section \ref{proof} is devoted to the proof of the following dimension formula for $\Tr C$:
	
	\begin{theorem}
		\label{main}
		Let $2\mathfrak{g}-2 \leq \deg([G/q])$ and $\deg(G)<n$.
		Consider the following two conditions:
		\begin{equation}
		\label{condition1}
		|Supp (G^-)| \leq 1,
		\end{equation}
		\begin{equation}
		\label{condition2}
		|X(\ff_{q^m})| > (2\mathfrak{g}-2 + \deg(G^+))q^{m/2}+Supp(G^+)(q^{m/2}+1).
		\end{equation}
		Under these conditions we have an exact formula for the dimension:
		$$\dim_{\ff_q}\Tr C = m(\deg(G) - \deg([G/q]))+\delta,$$
		where
		$$\delta = \begin{cases}
		1 & \mathrm{if~} |\Supp(G^-)|=0,\\
		0 & \mathrm{otherwise}.
		\end{cases}$$
		
		
	\end{theorem}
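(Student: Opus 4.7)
The plan is to compute $\dim_{\ff_q}(\Tr C_{\max})$ by rank--nullity applied to the $\ff_q$-linear map $\Tr: C_{\max} \to \ff_q^n$. Since condition~\eqref{condition2} forces $n > \deg(G)$, the evaluation $\frL(G) \to C_{\max}$ is injective, and Riemann--Roch (using $\deg(G) > 2g-2$) gives $\dim_{\ff_q}(C_{\max}) = m(\deg(G) + 1 - g)$. The theorem will then follow once one shows
$$\dim_{\ff_q}(\ker \Tr) = m(\deg([G/q]) + 1 - g) - \delta.$$

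To pin down $\ker \Tr$, introduce the $\ff_q$-linear map $\phi: \frL([G/q]) \to \frL(G)$ given by $h \mapsto h^q - h$. A pointwise divisor computation shows $\phi$ is well-defined: at $Q \in \Supp(G^+)$ with coefficient $n_Q$, the pole order of $h$ is at most $[n_Q/q]$, so $h^q - h$ has pole order at most $q[n_Q/q] \leq n_Q$; at $Q \in \Supp(G^-)$, since $h$ vanishes to order at least $|n_Q|$ so does $h^q - h$. Additive Hilbert~90 places $\mathrm{Im}(\phi) \subseteq \ker \Tr$. The kernel of $\phi$ is the $\ff_q$-subspace of constants in $\frL([G/q])$: it equals $\ff_q$ when $[G/q]$ is effective (the case $\#\Supp(G^-) = 0$, giving $\delta = 1$) and is trivial otherwise. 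Riemann--Roch applied to $[G/q]$ then yields $\dim_{\ff_q}\mathrm{Im}(\phi) = m(\deg([G/q])+1-g) - \delta$, which delivers the stated formula upon subtraction.

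The heart of the proof is the reverse inclusion $\ker \Tr \subseteq \mathrm{Im}(\phi)$. Given $f \in \frL(G)$ with $\Tr_{q^m/q}(f(P)) = 0$ for every $P \in D_{\max}$, the strategy is to rule out a nontrivial exponential sum. For each nontrivial additive character $\psi: \ff_q \to \cc^*$, the hypothesis forces
$$\sum_{P \in D_{\max}} \psi\bigl(\Tr_{q^m/q}(f(P))\bigr) = \#D_{\max}.$$
Bombieri's estimate on the curve $X$ bounds the absolute value of this sum by $(2g-2+\deg(G^+))q^{m/2} + \#\Supp(G^+)(q^{m/2}+1)$, unless the composite character $\psi\circ\Tr_{q^m/q}$ evaluated against $f$ is an Artin--Schreier coboundary. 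Combined with $\#D_{\max} = \#X(\ff_{q^m}) - \#\Supp(G^+)$, condition~\eqref{condition2} rules out the Bombieri bound, so the coboundary relation must hold; a divisor chase then recovers $h \in \frL([G/q])$ with $f = h^q - h$.

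The main obstacle is precisely this last step. The character $\psi \circ \Tr_{q^m/q}$ has order $p$ on $\ff_{q^m}$, so the naive application of Bombieri only produces a relation of the form $f = h^p - h + c$ over $\ff_{q^m}$, whereas we need the finer $\ff_q$-coboundary $f = h^q - h$. Bridging this gap --- descending from an $\ff_p$-coboundary conclusion to an $\ff_q$-coboundary conclusion compatible with the trace --- is exactly the content of Lemma~\ref{bombcond}; it is tautological when $q = p$, but for a general prime power $q$ requires careful use of the Artin--Schreier--Witt structure of the cover $y^q - y = f$ together with a descent of the conductor computation.
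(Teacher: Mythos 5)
Your overall reduction coincides with the paper's: rank--nullity for the trace map, the Artin--Schreier map $\phi(h)=h^q-h$ on $\frL([G/q])$, Riemann--Roch for the two dimension counts, and Bombieri's estimate under condition (\ref{condition2}) to force every trace-zero codeword into the image of $\phi$. You have also correctly located the crux: Bombieri's bound is inapplicable precisely to functions of the form $h^p-h$ with $h\in\overline{\ff}_{q}(X)$, so for $q=p^r$, $r>1$, one must exhibit some $f\in\ker\Tr\setminus\mathrm{Im}(\phi)$ and some $y\in\ff_q^*$ (the multiple $yf$ is what a nontrivial character $\psi$ of $\ff_q$ actually sees) such that $yf$ is not an $\ff_p$-coboundary. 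But at exactly this point the proposal stops: the assertion that the gap is closed by ``careful use of the Artin--Schreier--Witt structure of the cover $y^q-y=f$ together with a descent of the conductor computation'' is a name for a hoped-for argument, not an argument, and it is not how the paper proceeds. This step is the paper's self-declared main new contribution (Lemma \ref{bombcond}); without it, or a genuine substitute, your argument only yields the one-sided bound $\dim_{\ff_q}\Tr C_{\max}\le m(\deg(G)-\deg([G/q]))+\delta$.

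For comparison, the paper's mechanism is elementary: for $f\in\ff_{q^m}(X)$ set $D(f)=\{y\in\ff_q \mid yf=h^p-h \text{ for some } h\in\overline{\ff}_{q}(X)\}$, show by a Frobenius-descent argument that membership can be tested with $h\in\ff_{q^m}(X)$ up to an additive constant, and prove by induction that $\lvert D(f)\rvert\le p^{e(f)}$, where $e(f)$ is the largest $e$ admitting an expression $f=a_0+a_1h+\dots+a_eh^{p^e}$. If every $f\in K\setminus E$ had $D(f)=\ff_q$, then $e(f)\ge r$ would let one write $f=(h^{p^l})^q-h^{p^l}+f_1$ with $\deg(f_1)_\infty<\deg(f)_\infty$ and $D(f_1)=\ff_q$ again, contradicting a choice of $f$ with minimal pole degree. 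A secondary gap: your ``divisor chase'' recovering $h\in\frL([G/q])$ never invokes condition (\ref{condition1}), but that is exactly where it is needed --- since $h^q-h=\prod_{b\in\ff_q}(h-b)$, the prescribed zero at the single point of $\Supp(G^-)$ lands in one factor $h-b$ and one replaces $h$ by $h-b$; with two or more points in $\Supp(G^-)$ different shifts could be required and surjectivity of $\phi$ onto $E$ can fail.
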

	
	If $q$ is a prime number this theorem reduces to the main result of \cite{VLUGT91}.  Theorem \ref{main} is applicable for a more generalized trace when $q = p^r$ for some prime $p$ and $r>1$.  In this setting complications arise in the dimension reducing argument using Bombieri's estimate.  Bombieri's estimate alone does not collapse the dimension of the kernel of the trace map enough.  We have addressed these complications with the addition of a degree argument that shows the kernel can be reduced in a way that aligns with the result in \cite{VLUGT91}.
	\subsection{Examples}
	
	\begin{example}
		Let $q=p^r$.  Consider an elliptic curve $E$ defined over $\ff_{q^m}$.  A formula for counting points on E is given by:
		$$E(\ff_{q^m}) = q^m+1 - \pi^{m} - \overline{\pi}^{m}$$
		where $\pi \overline{\pi} = q$  and $\pi+ \overline{\pi} = a_p$, the linear coefficient of the numerator of an associated zeta function as described in \cite[page 301]{CIMNT}.
		
		For $G = k P_{\infty}$ we see that condition (\ref{condition2}) is
		$$ q^{m/2} - \frac{\pi^{m} + \overline{\pi}^{m}}{q^{m/2}}>k.$$
		Assuming this, Theorem \ref{main} states:
		For any $D$ such that $|D|>\deg(G)$ we have:
		$$\dim_{\ff_q} \Tr C(D,G) = m(k - [k/q]).$$
	\end{example}
	
	\begin{example}
		For a smooth projective curve $X$ defined over $\ff_{q^m}$, let $G = k P_{\infty}$ for  some positive integer $k$.  Using the the Hasse-Weil bound we have
		$$||X(\ff_{q^m})|-(q^m+1)| \leq 2\mathfrak{g}  q^{m/2}.$$
		By condition (\ref{condition2}) of Theroem \ref{main} we must also require the inequality
		$$|X(\ff_{q^m})| > (2\mathfrak{g}-2+k)q^{m/2} + (q^{m/2}+1).$$
		Combining these two inequalities, we see that (\ref{condition2}) is satisfied when
		$$q^{m/2}-4\mathfrak{g}+1>k.$$
		Using Theorem \ref{main} we obtain the following:
		\begin{cor}
			For $X$ a smooth projective curve over $\ff_{q^m}$ and $G=k P_{\infty}$,
			if $2\mathfrak{g}-2\leq [k/q]$ and  $k<\min(n,q^{m/2}-4\mathfrak{g}+1)$ then
			$$\dim_{\ff_q} \Tr C = m(k - [k/q])+1.$$
		\end{cor}
	\end{example}
	
	\begin{example} This is a generalization from an example in \cite{VLUGT91}.
		Let $q=p^r$, $X = \pp^1$ and $G= (g)_0 - P_{\infty}$ where $(g)_0$ is the zero divisor of a polynomial $g(z) \in \ff_{q^m}[z]$ which has no zeros in $\ff_{q^m}$.  Denote the number of different zeros of $g(z)$ by $s$.  Furthermore, we take $D=\sum_{x \in \ff_{q^m}} P_{x}$.  From condition (\ref{condition2}) we obtain the inequality:
		$$\deg(g(z))+s < \frac{q^m+1}{\sqrt{q^m}}+2.$$
		Write $g(z)=g_1^q g_2$, with $g_1(z),g_2(z) \in \ff_{q^m}[z]$ of degrees $r_1, r_2$ respectively, and $g_2(z)$ $q$-th power free.  With sufficiently many points as above, applying Theorem \ref{main} we have:
		$$\dim_{\ff_q} \Tr C(D,G) = m((q-1)r_1+r_2).$$
	\end{example}
	
	\section{Proof of Main Result}
	\label{proof}
	
	First note $C$ is a vector space over $\ff_{q^m}$ and $\Tr C$ is a vector space over $\ff_q$.  From this we  have the bound
	$$\dim_{\ff_{q^m}}C \leq \dim_{\ff_q}\Tr(C) \leq m(dim_{\ff_{q^m}}C).$$
	Using the $\ff_q$-linearity of the trace we have an exact sequence
	$$0 \rightarrow K \rightarrow C \rightarrow \Tr C \rightarrow 0$$
	where $K$ is the kernel, an $\ff_q$-linear subspace of $C$.  Hence
	\begin{equation}
	\label{eqnmain}
	m\dim_{\ff_{q^m}}C - \dim_{\ff_{q}} K = \dim_{\ff_q}\Tr(C).
	\end{equation}
	Therefore we can obtain the dimension of $\Tr C$ by first determining $\dim_{\ff_q} K$.  In practice, this is difficult.  Consider the subspace of K:
	
	$$E := \{ f = h^q-h  \mid f \in \frL(G), h \in \ff_{q^m}(X)\}.$$
	We will determine a sufficient condition (condition \ref{condition2} of Theorem \ref{main}) when $E=K$ using Bombieri's estimate and a degree argument. But to make this useful we first find conditions to determine the dimension of $E$.  First we will determine a sufficient condition (condition \ref{condition1} of Theorem \ref{main}) to determine the dimension of $E$.

	\subsection{Dimension of $E$}
	For $f = h^q-h \in \frL(G)$, by definition $(h^q-h) +G \geq 0$.  Counting multiplicity, each pole in $h$ corresponds to $q$ poles in $f$.   Hence, for $h \in \frL([G/q])$ we have $h^q-h \in \frL(G)$.

	Consider the map $\phi:\frL([G/q]) \to E$ where $\phi(h)=h^q-h$.  By examining degrees we can determine the kernel is $\ff_q \cap \frL([G/q])$. Note that for a general $G$ the map $\phi$ is not surjective.  
	
	\begin{lemma}
		\label{phisurj}
		When $|Supp (G^-)| \leq 1$ the map $\phi$ is surjective.
	\end{lemma}
	\begin{proof}
		
		Recall $[G/q]$ only changes the positive coefficients of $G$ and does not change $G^-$.  When $G^- = \emptyset$, there is no restriction on zeros in $\frL(G)$.  Therefore in this case $\phi$ is onto.
		
		When $|Supp(G^-)| = 1$ then $G^-=n_p P$ for some point $P \in X(\ff_{q^m})$ and negative integer $n_p$.  Every function in $\frL([G/q])$ must have a zero at $P$. In the factorization $h^q-h=\prod_{b\in \ff_q}(h-b)$, this zero must occur in at least one factor $h-b$.  Though $h$ may not be in $\frL([G/q])$, there will always exist some $b \in \ff_q$ such that $h-b \in \frL([G/q])$.  Observe $h^q-h = (h-b)^q-(h-b) = \phi(h-b)$.  Hence in this case $\phi$ is onto.
	\end{proof}
	
	If $G^- = \emptyset$ then the kernel of $\phi$ is $\ff_q$.  If $G^- \neq \emptyset$ then $\phi$ is injective.  Therefore for $\delta$ defined in Theorem \ref{main}, $\delta = \dim_{\ff_q} \ker \phi$.  Using Lemma \ref{phisurj} we have the following proposition.
	
	\begin{prop}
		\label{RRbig}
		If $| Supp(G^-)| \leq 1$, then the sequence
		
		$$
		\xymatrix
		{
			0 \ar[r] & \ff_q \cap \frL([ G/q ]) \ar[r] & \frL([ G/q ]) \ar[r]^{\phi} & E \ar[r] & 0
		}
		$$
		is exact.  Therefore we have a dimension formula for $E$:
		$$\dim_{\ff_q} E = \dim_{\ff_q} \frL[G/q]- \dim_{\ff_q} (\ff_q \cap \frL[G/q]).$$
	\end{prop}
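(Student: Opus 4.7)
The plan is to establish exactness of the three-term sequence by verifying separately that $\phi$ maps $\frL([G/q])$ into $E$, that $\ker \phi = \ff_q \cap \frL([G/q])$, and that $\phi$ is surjective under the hypothesis $\#\Supp(G^-) \leq 1$. The first two steps are essentially formal; surjectivity is the crux and is where the assumption on $G^-$ enters.

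For well-definedness I would do a point-by-point order analysis. At $Q$ with $n_Q > 0$, the bound $\ord_Q(h) \geq -[n_Q/q]$ gives $\ord_Q(h^q) \geq -q[n_Q/q] \geq -n_Q$ and $\ord_Q(h) \geq -n_Q$, hence $\ord_Q(h^q-h) \geq -n_Q$; at $Q$ with $n_Q < 0$ the zero condition on $h$ propagates directly to $h^q-h$. For the kernel, $\phi(h) = 0$ forces $h^q = h$, and since $\ff_{q^m}(X)$ is a field the factorization $h^q-h = \prod_{b \in \ff_q}(h-b)$ shows $h \in \ff_q$.

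The hard step is surjectivity. Given $f = h^q - h \in \frL(G)$ for some $h \in \ff_{q^m}(X)$, I have to exhibit an $h' \in \frL([G/q])$ with $h'^q - h' = f$, using the fact that $h \mapsto h + b$ leaves $\phi$ unchanged for any $b \in \ff_q$. When $G^- = \emptyset$, I would show $h$ already lies in $\frL([G/q])$: at any pole $Q$ of $h$ of exact order $e > 0$, the leading terms of $h^q$ and $h$ cannot cancel, so $h^q - h$ has a pole of order exactly $qe$ at $Q$, and the bound $qe \leq n_Q$ forces $e \leq [n_Q/q]$.

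When $\#\Supp(G^-) = 1$ with unique point $P$, the strategy is to exploit the factorization $h^q - h = \prod_{b \in \ff_q}(h - b)$. Since this product has a zero of order $\geq -n_P > 0$ at $P$, $h$ must be regular at $P$, and exactly one factor $h - b_0$, with $b_0 = h(P) \in \ff_q$, vanishes at $P$; the remaining factors are units at $P$, so all of the vanishing is carried by $h - b_0$ and $\ord_P(h - b_0) \geq -n_P$. Setting $h' := h - b_0$ preserves $\phi(h') = \phi(h)$ and, combined with the pole analysis of the previous case at points of $\Supp(G^+)$ (which is unaffected by subtracting a constant), yields $h' \in \frL([G/q])$. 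The delicate point I anticipate is handling the case when $-n_P$ is large: this is precisely the subtlety flagged in the paragraph preceding the proposition, and is resolved by the observation that a single factor among the $q$ factors absorbs all the vanishing at $P$.
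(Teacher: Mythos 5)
Your proposal is correct and follows essentially the same route as the paper: the pole-order count showing $h$ has poles of order at most $[n_Q/q]$ at points of $\Supp(G^+)$, and the factorization $h^q-h=\prod_{b\in\ff_q}(h-b)$ used to replace $h$ by $h-b_0$ with $b_0=h(P)\in\ff_q$ so that the single factor $h-b_0$ absorbs the full required vanishing at the unique point of $\Supp(G^-)$. Your write-up simply makes explicit the order computations that the paper leaves terse, including the observation (also noted in the paper) that the factors $h-a$ for $a\neq b_0$ are units at $P$, which is what permits coefficients of $G^-$ less than $-1$.
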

	
	Though $\phi$ may not exact we still have a dimension bound:
	$$\dim_{\ff_q} E \geq \dim_{\ff_q} \frL[G/q]- \dim_{\ff_q} (\ff_q \cap \frL[G/q]).$$
	
	Note in \cite{VLUGT91}, a similar result is obtained with the use of cohomology and other auxillary constructions.
	
	\subsection{Bombieri's Estimate}
	Our primary tool for determining when $K=E$ is a bound developed by Bombieri \cite{BOMBIERI66}.
	\begin{theorem}[Bombieri's estimate]
		\label{BOMBIERI}
		Let $X$ be a complete, geometrically irreducible, nonsingular curve of genus $\mathfrak{g}$, defined over $\ff_{q^m}$.  Let $f \in \ff_{q^m}(X), f \neq h^p-h$ for $h \in \overline{\ff_{q}}(X)$, with pole divisor $(f)_{\infty}$ on $X$. Then
		$$\left| \sum_{P \in X(\ff_{q^m} )\setminus (f)_{\infty}} \zeta_p^{\Tr_{q^m/p}(f(P))}\right| \leq (2\mathfrak{g}-2+t+\deg(f)_{\infty})q^{m/2}.$$
		where  $\zeta_p = \exp(2\pi i/p)$ is any primitive $p$-th root of unity and $t$ is the number of distinct poles of $f$ on $X$.
	\end{theorem}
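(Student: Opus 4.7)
The plan is to reduce Bombieri's estimate to the Riemann hypothesis for curves applied to the Artin--Schreier cover of $X$ associated to $f$, in three stages: normalize $f$, compute the genus of the cover, and read off the exponential sum from a single factor of its zeta function.

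First, since $f \neq h^p - h$ in $\overline{\ff}_q(X)$, I replace $f$ by $f - (h_0^p - h_0)$ for a suitable $h_0 \in \ff_{q^m}(X)$ so that every pole of $f$ has order prime to $p$. This Artin--Schreier reduction leaves the exponential sum unchanged and can only decrease $t$ and $\deg(f)_\infty$, so it suffices to prove the bound for the reduced $f$. After reduction, $y^p - y = f$ defines a connected $\ff_p$-Galois cover $\pi : Y \to X$, totally wildly ramified exactly above the poles of $f$. At a pole with pole order $d_i$ coprime to $p$, the unique ramification break equals $d_i$, so the local different has degree $(p-1)(d_i + 1)$; summing and applying Riemann--Hurwitz gives
\[
2g_Y - 2 = p(2g - 2) + (p-1)\bigl(t + \deg(f)_\infty\bigr).
\]

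Next, the numerator $P_Y(T)$ of the zeta function of $Y$ factors under the $\ff_p$-action as
\[
P_Y(T) = P_X(T)\prod_{c \in \ff_p^\times} L_c(T),
\]
where each $L_c \in \qq(\zeta_p)[T]$ is the $L$-function attached to the additive character $\psi_c : x \mapsto \zeta_p^{cx}$ pulled back along $\Tr_{q^m/p} \circ f$; the reciprocal roots of $L_c$ are the Frobenius eigenvalues on the first compactly supported cohomology of the associated Artin--Schreier sheaf, and the coefficient of $T$ in $-\log L_c(T)$ is exactly $\sum_P \zeta_p^{c\,\Tr_{q^m/p}(f(P))}$. The polynomials $\{L_c\}_{c \in \ff_p^\times}$ form a single $\Gal(\qq(\zeta_p)/\qq)$-orbit under coefficientwise Galois action ($\sigma_a$ sends $L_c$ to $L_{ac}$), so they share the common degree $(2g_Y - 2g)/(p-1) = 2g - 2 + t + \deg(f)_\infty$. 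Writing $L_1(T) = \prod_j (1 - \alpha_j T)$, the Riemann hypothesis for the smooth projective curve $Y$ forces $\abs{\alpha_j} = q^{m/2}$ in every complex embedding; hence, reading off the coefficient of $T$, the sum $\sum_P \zeta_p^{\Tr_{q^m/p}(f(P))}$ equals $\sum_j \alpha_j$ up to sign, and the triangle inequality yields the stated bound.

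The main obstacle is identifying each $\deg L_c$ individually with $2g - 2 + t + \deg(f)_\infty$, rather than merely bounding the sum $\sum_c \deg L_c$ by $2g_Y - 2g$. The Galois symmetry argument above sidesteps a direct Grothendieck--Ogg--Shafarevich or Swan conductor calculation, at the cost of checking carefully that the Artin--Schreier reduction in the first stage indeed does not inflate the numerical invariants $t$ and $\deg(f)_\infty$ appearing in the final bound.
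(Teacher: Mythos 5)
The paper does not prove this theorem at all: it is quoted verbatim as Bombieri's estimate and used as a black box, with \cite{BOMBIERI66} as the reference. So there is no internal proof to compare against; what you have written is a proof of the cited result itself. Your argument is the standard one (essentially Bombieri's own, as also presented in \cite{STICHT93}): pass to the Artin--Schreier cover $y^p-y=f$, which is geometrically irreducible precisely because $f\neq h^p-h$ over $\overline{\ff}_q(X)$; compute $g_Y$ by Riemann--Hurwitz from the ramification breaks at the poles; factor $P_Y(T)=P_X(T)\prod_{c\in\ff_p^\times}L_c(T)$; use the $\Gal(\qq(\zeta_p)/\qq)$-conjugacy of the $L_c$ to pin down each degree as $(2g_Y-2g)/(p-1)=2g-2+t+\deg(f)_\infty$; and apply the Riemann hypothesis for $Y$ to the reciprocal roots. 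All of these steps are correct, and the degree identification via Galois conjugacy is a legitimate way to avoid a conductor computation.

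The one step that is not quite right as stated is the opening normalization. You claim a single $h_0\in\ff_{q^m}(X)$ exists for which $f-(h_0^p-h_0)$ has every pole of order prime to $p$ while $t$ and $\deg(f)_\infty$ do not increase. The reduction is genuinely local: at each place one can lower the pole order inside the completion, but a global $h_0$ realizing all local reductions simultaneously is produced by approximation and may acquire poles at new places, where $h_0^p-h_0$ then has pole order divisible by $p$; so the clean global statement you assert needs either a careful induction or is simply false in the naive form. The standard repair, which costs you nothing, is to skip the global reduction entirely: the cover $Y$ is intrinsic, the ramification break $d_P$ at each place satisfies $d_P\leq -v_P(f)$ for the original $f$ (and places where $f$ is locally in $\wp(F_P)+\OO_P$ are unramified), so Riemann--Hurwitz gives the inequality $2g_Y-2\leq p(2g-2)+(p-1)(t+\deg(f)_\infty)$, which is all the final bound requires. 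One should also note the minor bookkeeping that the summation set in the theorem excludes the poles of the original $f$, whereas the coefficient of $T$ in $\log L_c$ sums over all unramified rational points; the discrepancy is at most $t$ terms of modulus $1\leq q^{m/2}$ and is absorbed by the same inequality. With those two adjustments your proof is complete and is, in substance, the argument the paper implicitly relies on by citation.
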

	
	Let $\overline{E} = \{f \in K \mid f=h^p-h \text{ for some }h\in \overline{\ff_{q}}(X)\}$.  Observe this is the subspace of $K$ which  prevents the use of Bombieri's estimate.
	
	\begin{lemma}
	\label{Esubset}
		$E\subseteq \overline{E}$.
	\end{lemma}
	\begin{proof}
		Recall $q=p^r$.  Therefore:
		$$\begin{array}{rcl}
		g^q-g &=& g^{p^r} - g\\
		&= &((g)^{p^{r-1}}+ \ldots +(g))^p - ((g)^{p^{r-1}}+ \ldots + (g))
		\end{array}$$
		Let $h=(g)^{p^{r-1}}+ \ldots +(g)$ and we see clearly that $g^q-g = h^p-h$.
	\end{proof}
	
	\begin{lemma}
		\label{equalsets}
		For each $g \in \overline{\ff_q}(X)$, there exists an $h \in \ff_{q^m}(X)$ and $c \in \ff_{q^m}$ such that $g^p-g=h^p-h+c$. Furthermore,
			$$\overline{E}  \subseteq  \{ f \in \ff_{q^m}(X) ~|~ f = h^p-h+c \mathrm{~for~some~} h \in \ff_{q^m}(X),c \in \ff_{q^m}\}. $$	
	\end{lemma}
	\begin{proof}
		Suppose there is an $f \in \ff_{q^m}(X)$ and an $h \in \overline{\ff_q}(X)$ such that $f=h^p-h$.  Take $\sigma = \Frob_{q^m}$, the $q^m$ Frobenius endomorphism on $\overline{\ff_{q^m}}$.  Since $\sigma(f) = f$ we may rework this so that $(\sigma(h)-h)^p = \sigma(h)-h$.  By considering the order of poles of $\sigma(h)-h$ we determine that $\sigma(h)-h$ must be a constant $a \in \ff_p$.  Let $a = b^{q^m}-b$ for some $b \in \overline{\ff_q}$.  Then $\sigma(b)=b+a$.  Also $\sigma(h-b)=h+a-(b+a) = h-b$.  Therefore $h-b \in \ff_{q^m}(X)$.  Let $h_1=h-b$.  Observe $f- b^p+b = h_1^p-h_1$.  Also $\sigma(b^p-b) = b^p-b$ so $b^p-b \in \ff_{q^m}$.  Therefore $f=h_1^p-h_1+b^p-b$.
	\end{proof}	
	
	For each $f \in K$, by definition $\Tr(f(P)) =0$. Observe that if $f \in K \setminus \overline{E}$ then $f$ satisfies the conditions of Bombieri's Estimate.  Hence, $\zeta_p^{\Tr(f(P))}=1$ for each $P$.  For such $f$, each term of the sum in the left-hand-side in Theorem \ref{BOMBIERI} contributes $1$.  This is a total contribution of $|X(\ff_q) \setminus (f)_{\infty}|$.  Hence for $f \in K \setminus \overline{E}$ we have
	
	$$\left| \sum_{P \in X(\ff_{q^m} )\setminus (f)_{\infty}} \zeta_p^{\Tr_{q^m/p}(f(P))}\right| = |(X(\ff_{q^m}) \setminus (f)_{\infty})| \leq (2\mathfrak{g}-2 + t + \deg(f)_{\infty}) q^{m/2}.$$
	
	Observe $t \leq \#\Supp(G^+)$ and $\deg(t)_{\infty} \leq \deg(G^+)$.  Using these two inequalities we obtain a more general bound:
	$$|X(\ff_{q^m}) | \leq (2\mathfrak{g}-2  + \deg(G^+))q^{m/2}+\#\Supp(G^+)(q^{m/2}+1).$$
	\begin{prop}
		\label{proppointbound}
		If $$|X(\ff_{q^m}) | > (2\mathfrak{g}-2  + \deg(G^+))q^{m/2}+\#\Supp(G^+)(q^{m/2}+1)$$ then $K=\overline{E}$.
		
	\end{prop}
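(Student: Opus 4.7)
The plan is to argue by contradiction: assume $K \neq E$. By Lemma \ref{bombcond} (which will be proved in the next subsection) one can choose $f \in K \setminus E$ not of the form $h^p - h$ for any $h \in \overline{\ff}_q(X)$, so that Bombieri's estimate applies to $f$. Since $f \in \frL(G)$, its pole divisor satisfies $(f)_\infty \leq G^+$, so $\Supp((f)_\infty) \subseteq \Supp(G^+)$, $t := \#\Supp((f)_\infty) \leq \#\Supp(G^+)$, and $\deg((f)_\infty) \leq \deg(G^+)$.

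The second step is to extract a lower bound on Bombieri's exponential sum from $f \in K$. By transitivity of trace, $\Tr_{q^m/p}(f(P)) = \Tr_{q/p}(\Tr_{q^m/q}(f(P)))$, and the inner trace vanishes on $D_{\max} = X(\ff_{q^m}) \setminus \Supp(G^+)$, so each term $\zeta_p^{\Tr_{q^m/p}(f(P))}$ with $P \in D_{\max}$ equals $1$. I would then split the sum over $X(\ff_{q^m}) \setminus (f)_\infty$ into the contribution from $D_{\max}$ and the contribution from those $\ff_{q^m}$-rational points of $\Supp(G^+)$ which are not actual poles of $f$. The first piece contributes exactly $\#D_{\max}$; the second has modulus at most $\#(\Supp(G^+) \cap X(\ff_{q^m})) - \#(\Supp((f)_\infty) \cap X(\ff_{q^m}))$ by the triangle inequality, giving a lower bound on the modulus of Bombieri's sum.

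Setting this lower bound against Bombieri's upper bound $(2g - 2 + t + \deg((f)_\infty))q^{m/2}$ produces an inequality that must then be reshaped into the form of the hypothesis, and this final bookkeeping is the step I expect to require the most care. The key observation is that the number of non-rational poles of $f$, namely $t - \#(\Supp((f)_\infty) \cap X(\ff_{q^m}))$, and the number of $\ff_{q^m}$-rational points of $\Supp(G^+)$ which are not poles of $f$, are disjoint subsets of $\Supp(G^+)$ and hence together contribute at most $\#\Supp(G^+)$. Exploiting $q^{m/2} \geq 1$ to promote a leftover constant count into a $q^{m/2}$ factor, and $\deg((f)_\infty) \leq \deg(G^+)$, these terms consolidate into
$$\#X(\ff_{q^m}) \leq (2g - 2 + \deg(G^+))q^{m/2} + \#\Supp(G^+)(q^{m/2}+1),$$
contradicting the hypothesis and forcing $K = E$.
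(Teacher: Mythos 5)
Your proposal is correct and follows essentially the same route as the paper: assume $K \neq E$, invoke Lemma \ref{bombcond} to obtain $f \in K\setminus E$ to which Bombieri's estimate applies, note that $\Tr_{q^m/q}(f(P))=0$ on $D_{\max}$ forces the exponential sum to be nearly maximal, and play this off against the upper bound using $(f)_\infty \leq G^+$. In fact your bookkeeping is more careful than the paper's, which simply asserts $\#(X(\ff_{q^m})\setminus (f)_\infty) \leq (2g-2+t+\deg(f)_\infty)q^{m/2}$ without separating out the $\ff_{q^m}$-rational points of $\Supp(G^+)$ that are not poles of $f$ (where the summand need not equal $1$); your disjointness observation is exactly what is needed to land on the stated constant $\#\Supp(G^+)(q^{m/2}+1)$.
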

	
	Note that the condition presented in Proposition \ref{proppointbound} is exactly condition \ref{condition2} from Theorem \ref{RRbig}.
	\begin{proof}
		Suppose such an inequality holds and $K \supsetneq \overline{E}$.  Then there is an element of $f \in K$ where $f \neq h^q-h$ for any $h \in \overline{\ff_q}(X)$.  Therefore we may apply Bombieri's estimate.  This is a contradiction.
	\end{proof}
	
	\subsection{$E$ and $\overline{E}$}
	Recall the definitions of $E$ and $\overline{E}$:

	$$E := \{ f = h^q-h  \mid f \in \frL(G), h \in \ff_{q^m}(X)\}.$$
	$$\overline{E} = \{f \in K \mid f=h^p-h \text{ for some }h\in \overline{\ff_{q}}(X)\}$$
	In  the case presented in \cite{VLUGT91} Vlugt had $\overline{E}=E$.  In the current more general case, the above argument provides conditions forcing all elements of $K$ to be of the form $h^p-h$, for $h\in\overline{\ff_q}(X)$.  However there may still be elements of this form that are not of form $g^q-g$, with $g\in \ff_{q^m}(X)$.  We will show that this is not the case and that condition \ref{condition2} of Theorem \ref{main} is sufficient to force $K=E$. To show this, it will be useful to develop our understanding of the interplay of $K$, $\overline{E}$ and $E$ and the nature of the degree of functions therein.
	
	As we saw previously in Lemma \ref{Esubset}, elements of the form $g^q-g$ can also be written in the form $h^p-h$. Also observe for any $f \in K$ and $y \in \ff_q$, the function $yf \in K$.  Furthermore, for any $f \in E$ and $y \in \ff_q$, $yf \in E$.   Consider the following:

	\begin{definition}
		Let $D(f)$ be the elements $y \in \ff_q$ such that $yf = h^p-h$ for some $h \in \overline{\ff_q}(X)$.
	\end{definition}
	\noindent
	
	Observe for $f \in K$, when $|D(f)|< q$ there is a $y$ such that $yf \in K \setminus \overline{E}$. Therefore, to prove Proposition \ref{bombcond} it will suffice to show that $|D(f)| < q$ for some $f \in K$.
	
	\begin{prop}
		Let $f,g \in K$, $t \in \F_p$ and $y \in D(f)$. We immediately have the following:
		
		\begin{enumerate}
			\item $f+g \in D(f)$
			\item $ty \in D(f)$
			\item $D(f)$ is a subgroup of $\F_q$ under addition.
			
		\end{enumerate}
	\end{prop}

	\begin{prop}
		\label{Dvector}
		If $D(f)=\ff_q$ and $D(g) = \ff_q$ then $D(af+bg)=\ff_q$ for each $a,b \in \ff_q$.
	\end{prop}

	\begin{lemma}
	\label{Dbound}
		
	Let  $f=h^p-h$ for some $h \in \overline{\F_q}(X)$ and $D(f)\neq \{0\}$.  Then
		$$|D(f)| \leq p|D(h)|$$
	\end{lemma}
		
	\begin{proof}
			
		Let $y \in D(f)$. Then $yf = g^p-g$ for some $g \in \overline{\F_q}(X)$.  Hence
			$$yf = g^p-g = yh^p-yh = (y^{1/p}h)^p -(y^{1/p}h) + (y^{1/p}h) - yh.$$
		Therefore
			$$(y^{1/p}-y)h = (g^p-g) -((y^{1/p}h)^p -(y^{1/p}h))$$
		Therefore $(y^{1/p}-y)$ is in $D(h)$. Hence for every $x, y \in D(f)$, $x^{1/p}-x$ and $y^{1/p}-y$ are in $D(h)$. Observe when
			$$x^{1/p}-x = y^{1/p}-y$$
			$$(x-y)^{1/p} = (x-y)$$
			$$(x-y)^p = (x-y)$$
		The only elements of $\F_q$ equal to their own $p^{\text{th}}$ power are elements of $\F_p$. Hence $x=y+t$ for some $t \in \F_p$. From this we see that $D(f)/\F_p$ can be identified with a subgroup of $D(h)$.  Hence
			$$|D(f)| \leq p|D(h)|.$$
	\end{proof}

	\begin{definition}
		For $f \in \ff_{q^m}(X)$, define the $p$-linear degree of $f$, denoted $e(f)$, to be the largest possible integer such that $f =a_c+a_0 g + a_1 g^p+ \ldots + a_{e(f)} g^{p^{e(f)}}$, where $a_c, a_0,\ldots, a_{e(f)}  \in \ff_{q^m}$, $g \in \ff_{q^m}(X)$.
	\end{definition}

	We immediately have the following  properties of $e(f)$:
	
	\begin{prop}~\
		
		\begin{enumerate}
			\item For $f \in \overline{E}$ we have $e(f) \geq 1$.
			\item For $g \in E$ we have $e(g) \geq r$.
			\item For $a \in \ff_{q^m}^*, b \in \ff_{q^m}$ we have $e(f) =e(af+b)$.
		\end{enumerate}
		
	\end{prop}
	
	\begin{prop}
	\label{Dbound2}
		For $f \in \ff_{q^m}(X)$ such that $e(f) = 0$ or $f=h^p-h$ for some $h\in \overline{\F_q}(X)$, we have the inequality:
		$$|D(f)| \leq p^{e(f)}.$$
	\end{prop}
	
	\begin{proof}

		We proceed by induction. Suppose $e(f)=0$.  By Lemma \ref{equalsets} we have $yf \neq h^p-h+c$, for any $y \in \ff_{q^m}^*$, $h \in \ff_{q^m}(X)$, and $c\in \ff_{q^m}$.  Therefore $D(f) = \{0\}$ and $|D(f)|=1 \leq p^{e(f)} = p^0 = 1$.
		
		Now consider a positive integer $k$ and $f$ such that $e(f) = k$.  Without loss of generality we may assume that $|D(f)| > 1$.  Hence we may also assume without loss of generality that $f=h^p-h$ for some $h\in \ff_{q^m}(X)$. It follows that $e(f) \geq 1 + e(h)$.  Since $e(f)=k$ we have  $e(h) <k$.  By the inductive hypothesis $|D(h)| \leq p^{e(h)}$.  Combining this with Lemma \ref{Dbound} we have: 
		$$|D(f)| \leq p|D(h)|=p^{e(h)+1} \leq p^{e(f)}.$$

	\end{proof}

	\begin{cor}
		\label{corDbound}
		If $|D(f)| =q=p^r$ then $e(f)$ $\geq r$.
	\end{cor}

	\begin{prop}
		\label{bombcond}
		Suppose $$|X(\ff_{q^m}) | > (2\mathfrak{g}-2  + \deg(G^+))q^{m/2}+\#\Supp(G^+)(q^{m/2}+1).$$ If $K \neq E$, then $K \setminus \overline{E}$ is nonempty.
	\end{prop}
	\begin{proof}
	
	Suppose $K \neq E$ and $D(f)=\ff_q$ for each $f\in K \setminus E$.  Such an $f \in \ff_{q^m}(X)$ cannot be constant.  Choose $f \in K \setminus E$ with the least number of poles. In other words, $\deg(f)_{\infty}$ is minimal and positive.  By the assumption on $|X(\ff_{q^m})|$, we may apply Corollary \ref{corDbound}, so there is some $l \in \zz_{\geq 0}$, $h\in \ff_{q^m}(X)$ and $a_c, a_1,\ldots,a_{r+l} \in \ff_{q^m}$ such that
	$$f = a_{r+l} h^{p^{r+l}} + a_{r+l-1} h^{p^{r+l-1}} + \ldots  +a_{1}h+a_{c}.$$
	We may rewrite this as
	$$f= f_E+f_1$$
	where
	$$f_E = (a_{r+l}h^{p^l})^q -( a_{r+l}h^{p^l})$$
	$$f_1 = ( a_{r+l}h^{p^l})+a_{r+l-1} h^{p^{r+l-1}} + \ldots  +a_{1}h+a_{c} \in \ff_{q^m}(X).$$
	Observe $f_E \in K$ and $D(f_E)= \ff_q$.  Hence $f_1 = f-f_E \in K$. By Proposition \ref{Dvector},  $D(f_1) = \ff_q$.  But
	$$\deg(f_1)_{\infty} \leq p^{r+l-1} \cdot \deg(h)_{\infty}$$
	and
	$$\deg(f)_{\infty} = p^{r+l} \deg(h)_{\infty}.$$
	This contradicts the choice of an $f$ with minimal poles.  Hence when $K\neq E$ we can always choose an $f \in K$ not of the form $h^p-h$.  
	\end{proof}
	
	\begin{proof}[Proof of Theorem $\ref{main}$]
	
	Let  $2\mathfrak{g}-2 \leq \deg([G/q])$, $\deg(G)<n$ and also assume condition \ref{condition1} and condition \ref{condition2}.
	
	By condition \ref{condition2}, Proposition \ref{proppointbound} and Proposition \ref{bombcond} we see that $K=E$.  Then, using  condition \ref{condition1} and \ref{RRbig} we compute the dimension of $E$.  Applying this to equation \ref{eqnmain} we obtain Theorem \ref{main}, a dimension formula for algebraic-geometric trace codes, as desired.
	\end{proof}
	
	\section*{Acknowledgement}
	The authors would like to thank Daqing Wan for his generous comments and suggestions.

	\bibliographystyle{amsplain}
	\bibliography{ple}

	\newcommand{\tw}{.5\textwidth}
	\begin{minipage}[b]{\tw}
		~\\
		
		Phong Le\\
		\textsc{Goucher College\\
			Towson, MD 21211}\\
		{\tt phong.le@goucher.edu}\\
		
	\end{minipage}
	
	\begin{minipage}[b]{\tw}
		~\\
		
		Sunil Chetty\\
		\textsc{College of Saint Benedict and Saint John's University\\
			Saint Joseph, MN}\\
		{\tt schetty@csbsju.edu}
	\end{minipage}
	
\end{document}